  %
  %
  %
  %
  %
  %
  %
  %
  %
  \documentclass[12pt]{amsart}
  
\usepackage{amssymb} 
\usepackage{xcolor} 
\usepackage{fancybox}
\usepackage{dashbox}
\usepackage{mathrsfs}
\newboolean{showcomments}
\setboolean{showcomments}{true}
\ifthenelse{\boolean{showcomments}}
{ \newcommand{\mynote}[3]{
  \fbox{\bfseries\sffamily\scriptsize#1}
  {\small$\blacktriangleright$\textsf{\emph{\color{#3}{#2}}}$\blacktriangleleft$}}}
{ \newcommand{\mynote}[3]{}}
\definecolor{asparagus}{rgb}{0.53, 0.66, 0.42}


  \usepackage{float}

  \usepackage{pgffor}
  \usepackage{tikz}
  \usetikzlibrary{calc,3d,arrows,positioning,shapes.misc}

  \usepackage[english]{babel}
  \usepackage{amsthm}
  \usepackage{amsmath}
  \usepackage{amssymb}
  \usepackage[latin1]{inputenc}
  \usepackage{setspace}
  \usepackage{enumerate}
  \usepackage{stmaryrd}

  \usepackage[colorlinks=true,linkcolor=black,citecolor=black,filecolor=black,urlcolor=black,menucolor=black]{hyperref}

  \usepackage{array}
  \usepackage{graphicx}
  \usepackage[babel]{csquotes}
  \usepackage{geometry}
  \usepackage{bbm}
  \usepackage{stmaryrd}
  \usepackage[all]{xy}
  \usepackage{mathrsfs}

  \geometry{paper=a4paper,left=25mm,right=25mm,top=25mm,bottom=30mm}

  \theoremstyle{plain}
  \newtheorem{thm}{Theorem}

  \newtheorem{lem}[thm]{Lemma}
  
  \theoremstyle{definition}
  \newtheorem{defn}[thm]{Definition}
  
  \newtheorem{rem}[thm]{Remark}

  \usepackage{xpatch}
  \xpatchcmd{\proof}{\itshape}{\normalfont\bfseries}{}{}

  \newcommand{\step}[1]{\par\medskip\par\noindent\textit{#1}} 

 \def \eps{\varepsilon}
  \def \F{\mathscr{F}}
  \def \I{\mathscr{I}}

  \def \R{\mathbb{R}}
  
  \def \Sym{\mathcal{S}(d)}

  \newcommand {\lip} {\mathop \textup{Lip}}

  \begin{document}

\title[Well-posedness for degenerate elliptic PDE]{Well-posedness for degenerate elliptic PDE arising in optimal learning strategies}
\author{Tim Laux}
\address{Department of Mathematics, 
	University of California,
	 Berkeley, California 94720-3840 USA}
\email{tim.laux@math.berkeley.edu}

\author{J.\ Miguel Villas-Boas}
\address{Haas School of Business,
	University of California, Berkeley
	Berkeley, California 94720-1900 USA}
\email{villas@haas.berkeley.edu}

\maketitle
    \begin{abstract}
  	
  	We derive a comparison principle for a degenerate elliptic partial differential equation without boundary conditions which arises naturally in optimal learning strategies. Our argument is direct and exploits the degeneracy of the differential operator to construct (logarithmically) diverging barriers.
    \medskip
    
  \noindent \textbf{Keywords:} Viscosity solutions, degenerate elliptic PDE, comparison principle
  
  \medskip

  \noindent \textbf{Mathematical Subject Classification:} 35D40, 35B50, 35J70, 49L25, 93E35
  \end{abstract}

%
%
%

The purpose of this paper is the analysis of a degenerate elliptic partial differential equation (PDE) arising from a stochastic process in optimal learning strategies. Our main result establishes the uniqueness of viscosity solutions to this degenerate PDE \emph{without} boundary conditions. The PDE we consider, cf.\ \eqref{eq:pde classical} for its precise form, shares key features with the following toy model
\[
	\max
	\Big\{ 
		\max_{1\leq i \leq d}
		\big\{
			-x_i^2(1-x_i)^2 u_{x_ix_i} + u
			\big\}, 
			u- \varphi(x)
	\Big\}
	=0 \quad \text{in } (0,1)^d,
\]
a fully nonlinear degenerate elliptic PDE with obstacle $\varphi$. Note that no boundary conditions are imposed on the faces of the unit cube $(0,1)^d$. 
An excellent framework to study such equations is the theory of viscosity solutions, starting from the fundamental work of Crandall, Ishii, and Lions \cite{crandall1992user}, cf.\ \cite{touzi2012optimal} for an exposition of relevant parts of this field and the connection to stochastic differential equations. However, the additional degeneracy due to the vanishing of $-x_i^2(1-x_i)^2 $ at the boundary and the abscence of boundary conditions causes additional difficulties which we will handle here with care. 
 Our proof is direct and elementary. The key idea is to exploit the degeneracy of the differential operator at the boundary to construct (logarithmically) diverging barriers. 
 On a different note, regarding the regularity of the solution $u$ across the free boundary and the more subtle question of the regularity of the free boundary itself, we refer the interested reader to the crucial work of Caffarelli \cite{Caffarelli1977}, and the expository notes \cite{figalli2018free}. 
 
The model we analyze in this paper was introduced in the recent paper \cite{KeVil17} by Ke and one of the authors: a decision maker may decide among $d$ alternatives on which they can collect information, and ultimately invest. 
In this model, the payoff $\pi_i$ of the $i^{\textup{th}}$ alternative is assumed to take either of the two values $\underline\pi_i < \overline\pi_i$. These random variables $(\pi_1,\dots,\pi_d)$ are assumed to be independent. The outside option has the deterministic payoff $\pi_0$. 
%
%
%
%
As the process $\pi_i$ is assumed to take only two values, it is characterized by the belief $X_i(t) = P(\pi_i(t) =\overline \pi_i | \F_t)$, where $\F_t$ is a filtration representing the observed signals until time $t$. The decision maker's allocation policy $\I = \{I_t\}_{t>0}$ controls the stochastic differential equation (SDE) 
\begin{equation}\label{SDE}
	dX_i = \frac{\overline \pi_i - \underline \pi_i}{\sigma_i^2} X_i \left(1-X_i\right) 
	\left\{ 
		\left[\pi_i- \underline\pi_i (1-X_i) - \overline \pi_i X_i\right]dT_i
		+ \sigma_i dW(T_i)
	\right\},
\end{equation}
where $T_i(t) = |\{ s\in(0,t)\colon I_s(X_i(s))=1\}|$ denotes the accumulated time alternative $i$ has been investigated.

Let us briefly argue why this is sensible intuitively; we refer the interested reader to \cite{KeVil17} for the derivation of \eqref{SDE} and more details: The larger the signal-to-noise ratio $\frac{\overline \pi_i - \underline \pi_i}{\sigma_i^2}$ is, the more likely will the decision maker update their belief according to a new signal. Also note that it makes sense that the right-hand side is decreasing in the noise $\sigma_i$: the larger the noise, the less likely will the decision maker trust the new signal and update their belief. The drift $\pi_i- \underline\pi_i (1-X_i) - \overline \pi_i X_i$ is simply the difference of the prospected and expected value of alternative $i$. Finally, note that the prefactor $X_i(1-X_i)$ indicates that the decision maker is most likely to update their belief if they were undecided in the first place, i.e., if $X_i=\frac12$.

The expected payoff at a stopping time $\tau$ and under an allocation policy $\I$ is
\begin{equation}\label{eq:payoff}
J(x;\I,\tau) := E \Big[
\max\big\{ \max_{1\leq i \leq d} \{ \overline\pi_i X_i + \underline\pi_i(1-X_i)\},   \pi_0\big\}
- \sum_{i=1}^d c_i T_i(\tau)
 \Big| X(0)=x
\Big].
\end{equation}
The decision maker's objective is to maximize the payoff:
\begin{equation}\label{eq:def V}
	V(x) = \sup_{\I,\tau} J(x;\I,\tau).
\end{equation}
In \cite{KeVil17} it is argued that the value function $V$ satisfies the Hamilton-Jacobi-Bellman PDE
\begin{equation}\label{eq:PDE V}
	 \max\left\{\max_{1\leq i \leq d} \left\{ \frac{\left(\overline \pi_i - \underline \pi_i\right)^2}{2\sigma_i^2} x_i^2 (1-x_i)^2 V_{x_ix_i}-c_i  \right\} , g(x)-V\right\}=0 \quad \text{in } Q,
\end{equation}
where $Q=(0,1)^d$ denotes the unit cube and 
$g(x): = 
\max\left\{ 
	 \max_i
	 	\left\{
			\overline\pi_ix_i + \underline\pi_i (1-x_i)
		\right\},
		\pi_0
\right\}
$.

Clearly, the payoff satisfies $\pi_0 \leq J(x;\I,\tau) \leq \max_i \overline \pi_i$ and hence we have the uniform bounds for the value function
\[
0< \pi_0 \leq V(x) \leq \max_{1\leq i \leq d} \overline \pi_i <\infty.
\]
The Lipschitz-continuity of the value function $V \in C^{0,1}( Q)$ can be derived in the following way. Given two points $x,x'\in Q$, using the optimal allocation policy $\I$ and stopping time $\tau$ of $x$ for $x'$ yields
\[ 
	V(x)-V(x') \leq J(x;\I,\tau) -J(x';\I,\tau) 
	= E \big[
	f(X_\tau)
	\big| X_0=x
	\big]
	- E \big[
	f(X_\tau)
	\big| X_0=x'
	\big],
\]
where we momentarily defined the function $f(x):=\max\big\{ \max_{1\leq i \leq d} \{ \overline\pi_i x_i +\underline\pi_i(1-x_i) \},   \pi_0\big\}$, which is Lipschitz continuous as the maximum of linear functions. Note that due to the choice of the allocation policy, the last term in $J$, which is nonlocal in time cancels exactly.
Hence, denoting by $\lip(f)$ the Lipschitz constant of $f$, 
\[
	V(x)-V(x') \leq  \lip(f) E\big[ |X_\tau-X'_\tau| \big| X_0 = x, X'_0 =x'\big],
\]
so that we only need to appeal to the stability of the SDE \eqref{SDE} to obtain $V(x)-V(x') \leq C |x-x'|.$ Interchanging the roles of $x$ and $x'$ proves the Lipschitz continuity of $V$.


\medskip


Let us suppose for simplicity that all payoffs and noise levels are equal so that \eqref{eq:PDE V} becomes
\begin{equation}\label{eq:PDE easyV}
	\max\left\{\max_{1\leq i \leq d} \left\{ x_i^2 (1-x_i)^2 V_{x_ix_i}-c_i  \right\} , g(x)-V\right\}=0 \quad \text{in } Q.
\end{equation}

In order to rewrite \eqref{eq:PDE easyV} in a more familiar  form we use the change of variables $V(x)=b-e^{ u(x)}$, where $b> \max_i \overline \pi_i$, which leads us to
\begin{equation}\label{eq:pde classical}
 \max\left\{\max_{1\leq i \leq d} \left\{- x_i^2 (1-x_i)^2  \left(  u_{x_ix_i} -  u_{x_i}^2 \right)-c_i e^{- u} \right\} , 1-(b-g(x)) \,e^{-u}\right\}=0 \quad \text{in } Q,
\end{equation}
where $g$ could now be any given continuous function on $\bar Q$ such that $b-g(x)>0$ for all $x\in \bar Q$ and $c_1,\dots,c_d>0$ are given positive constants. Note that the condition on $g$ is true in the above concrete example since $b> \max_i  \overline \pi_i$

Setting
\[
F(x,r,p,A) := \max\left\{\max_{1\leq i \leq d} \left\{-x_i^2 (1-x_i)^2  \left( A_{ii} -  p_i^2 \right)-c_i e^{-   r} \right\} , 1-(b-g(x)) \,e^{-r}\right\}
\]
for $x\in Q$, $r\in \R$, $p\in \R^d$, and $A=(A_{ij})_{i,j=1}^d \in \Sym = \{M\in \R^{d\times d} \colon M^T=M\}$, the equation reads
\[
F(x,u,Du,D^2u)=0,
\]
which can be formulated in the viscosity sense, see Definition \ref{def:viscosity_solution} below.


The main result of this work is the following comparison theorem.

\begin{thm}\label{thm:main}
If $u,v\in C^{0,1}(Q)$ satisfy 
\begin{equation}\label{eq:subsolution}
F(x,u,Du,D^2u) \leq 0 \quad \text{and} \quad F(x,v,Dv,D^2v) \geq 0 \quad \text{in the viscosity sense,}
\end{equation}
then $u\leq v$ in $Q$.
\end{thm}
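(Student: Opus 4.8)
The plan is to run a doubling-of-variables argument, but since no boundary conditions are available we must first force the relevant maximum strictly into the interior of $Q$ by means of a diverging barrier. Set $\psi(x):=-\sum_{i=1}^d\log\big(x_i(1-x_i)\big)$, which is smooth on $Q$, satisfies $\psi\geq d\log 4>0$, and diverges to $+\infty$ along every face of $\partial Q$. This particular choice is dictated by its exact compatibility with the degeneracy: writing $\beta(t):=t^2(1-t)^2$ for the degenerate coefficient, one computes $\beta(x_i)\,\psi_{x_ix_i}=x_i^2+(1-x_i)^2$, $\beta(x_i)\,\psi_{x_i}^2=(1-2x_i)^2$ and $\beta(x_i)\,|\psi_{x_i}|=x_i(1-x_i)|1-2x_i|$, all of which are uniformly bounded on $Q$. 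Fixing $\delta>0$ and setting $v^\delta:=v+\delta\psi$, it follows that whenever $v^\delta$ is tested from below by a smooth function at an interior point, the resulting inequality differs from the supersolution inequality for $F$ by at most $\delta C$, where $C$ depends only on $d$, $\lip(u)$ and the (finite, since $u\in C^{0,1}(Q)\subset C(\bar Q)$) oscillation of $u$; here we also use $e^{-\delta\psi}\leq1$, so that the obstacle term and the zeroth-order terms $-c_ie^{-r}$ only move in the favourable direction.

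Arguing by contradiction, assume $M_\delta:=\sup_Q(u-v^\delta)>0$. Since $u-v^\delta\to-\infty$ at $\partial Q$ this supremum is attained at an interior point, and the standard penalization analysis applied to $\Phi_\varepsilon(x,y):=u(x)-v^\delta(y)-\tfrac1{2\varepsilon}|x-y|^2$ yields interior maximum points $(x_\varepsilon,y_\varepsilon)\in Q\times Q$ with $u(x_\varepsilon)-v^\delta(y_\varepsilon)>\tfrac12M_\delta$ for $\varepsilon$ small, $|x_\varepsilon-y_\varepsilon|\to0$ and $\tfrac1\varepsilon|x_\varepsilon-y_\varepsilon|^2\to0$. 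The theorem on sums \cite{crandall1992user} provides $X,Y\in\Sym$ with $(p_\varepsilon,X)\in\bar J^{2,+}u(x_\varepsilon)$, $(p_\varepsilon,Y)\in\bar J^{2,-}v^\delta(y_\varepsilon)$, $p_\varepsilon=\tfrac{x_\varepsilon-y_\varepsilon}\varepsilon$ (hence $|p_\varepsilon|\leq\lip(u)$), and the usual matrix inequality. The subsolution inequality $F(x_\varepsilon,u(x_\varepsilon),p_\varepsilon,X)\leq0$ forces \emph{every} term in the maximum to be nonpositive, while the supersolution inequality for $v^\delta$ says that \emph{at least one} term is nonnegative up to the $\delta C$ error, and I split into two cases accordingly.

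If the obstacle term is the nonnegative one, then $(b-g(y_\varepsilon))e^{-v(y_\varepsilon)}\leq1\leq(b-g(x_\varepsilon))e^{-u(x_\varepsilon)}$; taking logarithms and using that $b-g$ is continuous and bounded below on $\bar Q$ gives $u(x_\varepsilon)-v(y_\varepsilon)\leq\omega(|x_\varepsilon-y_\varepsilon|)$ for a modulus $\omega$, contradicting $u(x_\varepsilon)-v(y_\varepsilon)\geq u(x_\varepsilon)-v^\delta(y_\varepsilon)>\tfrac12M_\delta$ once $\varepsilon$ is small. If instead the $i$-th diffusion term is nonnegative, I subtract it from the $i$-th subsolution inequality; the second-order contribution $\beta(x_{\varepsilon,i})X_{ii}-\beta(y_{\varepsilon,i})Y_{ii}$ is controlled by testing the matrix inequality against $\big(\sqrt{\beta(x_{\varepsilon,i})}\,e_i,\sqrt{\beta(y_{\varepsilon,i})}\,e_i\big)$ and using that $t\mapsto\sqrt{\beta(t)}=t(1-t)$ is $1$-Lipschitz, so it is at most $\tfrac3\varepsilon|x_\varepsilon-y_\varepsilon|^2\to0$; the first-order term $(\beta(x_{\varepsilon,i})-\beta(y_{\varepsilon,i}))p_{\varepsilon,i}^2\to0$ as well; and the zeroth-order difference $c_i\big(e^{-v^\delta(y_\varepsilon)}-e^{-u(x_\varepsilon)}\big)$ is bounded below by a fixed positive multiple of $M_\delta$, because $u(x_\varepsilon)-v^\delta(y_\varepsilon)>\tfrac12M_\delta$ and both values remain in a compact range independent of $\varepsilon$. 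Passing to $\varepsilon\to0$ along a subsequence with fixed active index yields $M_\delta\leq\delta C'$. In either case $M_\delta\leq\delta C'$, so for every $x\in Q$ we get $u(x)-v(x)\leq M_\delta+\delta\psi(x)\leq\delta C'+\delta\psi(x)\to0$ as $\delta\to0$, i.e.\ $u\leq v$ in $Q$.

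I expect the main obstacle to be verifying that the logarithmic barrier is the \emph{right} one: that $\psi$ diverges at $\partial Q$ while the products $\beta(x_i)\psi_{x_ix_i}$, $\beta(x_i)\psi_{x_i}^2$ and $\beta(x_i)|\psi_{x_i}|$ stay bounded, that the induced perturbation of the equation has an absorbable sign, and that the error constants can be kept genuinely independent of $\delta$. Once that matching is secured, the remainder is routine viscosity-solutions machinery, with the clean bound on $\beta(x_{\varepsilon,i})X_{ii}-\beta(y_{\varepsilon,i})Y_{ii}$ being precisely what the degeneracy of the operator buys us.
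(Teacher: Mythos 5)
Your proposal is correct and follows the same core strategy as the paper: the logarithmic barrier $\psi(x)=-\sum_i\log(x_i(1-x_i))$ chosen precisely so that $\beta(x_i)\psi_{x_ix_i}$, $\beta(x_i)\psi_{x_i}^2$, and $\beta(x_i)|\psi_{x_i}|$ stay bounded, combined with doubling of variables, Ishii's lemma applied to $(\sigma_j(x)e_j,\sigma_j(y)e_j)$ with $\sigma_j$ being $1$-Lipschitz, and a case split on which term of the maximum is active. There are two implementation differences worth noting. First, you perturb only $v$ (set $v^\delta = v+\delta\psi$), whereas the paper perturbs both $u_\eps = u-\eps\Phi$ and $v^\eps = v+\eps\Phi$. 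Your asymmetric choice has a small payoff: the gradient bound $|p_\varepsilon|\leq\lip(u)$ is immediate from maximality in $x$ alone, whereas the paper's symmetric perturbation destroys the uniform Lipschitz bound on $u_\eps$ and $v^\eps$, forcing a separate argument (Lemma~\ref{lem:log}, testing the maximality with $(x,x)$ and $(y,y)$ and choosing the better of the two resulting inequalities) to recover $\alpha|x-y|$ bounded. You do, however, need a short argument you left implicit: that $x_\varepsilon$ itself lands in the interior, which follows because $v^\delta(y_\varepsilon)\leq \max u - M_\delta$ pins $\psi(y_\varepsilon)$ and hence $\dist(y_\varepsilon,\partial Q)$ independently of $\varepsilon$, while $|x_\varepsilon-y_\varepsilon|=O(\sqrt{\varepsilon})$. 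Second, you close the argument by extracting the quantitative bound $M_\delta\leq\delta C'$ and letting $\delta\to0$, using convexity of $t\mapsto e^{-t}$ on a compact range; the paper instead reaches a contradiction directly via the strict monotonicity \eqref{eq:monotonicity_classical} in the $r$-variable (and has to couple $\alpha_n\to\infty$, $\eps_n\to0$). Both closings are valid; yours keeps the two parameters decoupled and avoids Lemma~\ref{lem:epsviscositysolution} as a separate statement, at the cost of carrying the $\delta$-error explicitly through the case analysis. Minor imprecisions (writing ``$\leq1$'' rather than ``$\leq1+\delta C$'' in the obstacle case, plugging $v^\delta(y_\varepsilon)$ rather than $v(y_\varepsilon)$ into the exponential) do not affect correctness since they move in the favourable direction.
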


\begin{rem}
As an immediate consequence, the viscosity solution of \eqref{eq:pde classical}, without boundary conditions, is unique in the class $C^{0,1}(Q)$.  Since the construction for $V$ is of this class and bounded, also $u(x)=\log(b-V(x))$ is Lipschitz and the problem is well-posed.
\end{rem}

\begin{rem}
There are two obvious difficulties: 
\begin{enumerate}
\item There are no boundary conditions.
\item $F$ is not uniformly elliptic.
\end{enumerate} 
We will see that these two properties are interconnected: The degeneracy at the boundary makes boundary conditions oblivious. Indeed, the underlying stochastic process \eqref{SDE} does not reach the boundary $\partial Q$ in finite time. This is in fact due to the degeneracy: even at the simpler example $dX_t = X_t dW_t$ one can see this effect by a direct computation.

Since $c_i>0$ and $g(x) < b$, the function $F$ is strictly monotonic increasing in $r$:
For every $R>0$ there exists a constant $\theta>0$ such that for all $x\in Q$, $p \in \R^d$ and $A \in \Sym$ we have
\begin{equation}\label{eq:monotonicity_classical}
 \theta( r-r') \leq F(x,r,p,A) - F(x,r',p,A)  \quad \text{for all } r'\leq r \leq R.
\end{equation}
\end{rem}
For the rest of the paper we will assume for simplicity that $c_1=\ldots=c_d = 1 $.

\medskip

Let us first recall the definition of viscosity solutions.

\begin{defn}\label{def:viscosity_solution}
A continuous function $u\in C(Q)$ is called a subsolution (supersolution) of $F(x,u,Du,D^2u)=0$ and we write
\begin{equation}\label{eq:defFleq0}
F(x,u,Du,D^2u) \leq  0 \;(\geq0) \quad \text{in the viscosity sense}
\end{equation}
if the following holds: 
Let $\zeta\in C^2(Q)$ be such that $\zeta-u$ has a local maximum (minimum) at $x_0$, then 
\begin{equation}\label{eq:defFgeq0}
F(x,\zeta,D\zeta,D^2\zeta)\leq0 \;  (\geq0) 
\end{equation}
at the point $x=x_0$.
The function $u$ is called a solution of $F(x,u,Du,D^2u)=0$ and we write 
\[
F(x,u,Du,D^2u) =0 \quad \text{in the viscosity sense}
\]
if $u$ is both a sub- and a supersolution.
\end{defn}


\begin{lem}
Let $u\in C(Q)$. Then 
\[
F(x,u,Du,D^2u) \leq 0 \quad \text{in the viscosity sense}
\]
 if and only if for every $x_0\in Q$ and any generalized superjet $(p,A) \in \bar J_+ u\, (x_0)$ 
\[
F(x_0,u_0,p,A)\leq 0.
\]
Let $v\in C(Q)$. Then 
\[
F(y,v,Dv,D^2v) \geq 0 \quad \text{in the viscosity sense}
\]
 if and only if for every $y_0\in Q$ and any generalized subjet $(p,A) \in \bar J_- v\, (y_0)$ 
\[
F(y_0,v(y_0),p,A)\geq 0.
\]
\end{lem}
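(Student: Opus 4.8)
\medskip
\noindent\textbf{Proof strategy.}
This lemma is the standard reconciliation of the test-function formulation of Definition~\ref{def:viscosity_solution} with the formulation via the generalized semijets, and the plan is to prove it in two stages: first establish the assertion with the \emph{ordinary} semijets $J_\pm u(x_0)$ in place of their closures, and then upgrade to $\bar J_\pm$ using the continuity of $F$. I will spell out only the subsolution statement; the supersolution one follows by running the same argument with all inequalities and all extrema reversed (or, formally, by applying the subsolution case to $-v$ and to the nonlinearity $(x,r,p,A)\mapsto -F(x,-r,-p,-A)$). Recall that $(p,A)\in J_+ u(x_0)$ means $u(y)\le u(x_0)+p\cdot(y-x_0)+\tfrac12(y-x_0)^TA(y-x_0)+o(|y-x_0|^2)$ as $y\to x_0$, while $(p,A)\in\bar J_+ u(x_0)$ means there are $x_n\to x_0$ and $(p_n,A_n)\in J_+ u(x_n)$ with $p_n\to p$ and $A_n\to A$; since $u\in C(Q)$ this automatically entails $u(x_n)\to u(x_0)$.

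\emph{Stage 1: Definition~\ref{def:viscosity_solution} $\iff$ ``$F(x_0,u(x_0),p,A)\le 0$ for all $x_0\in Q$ and $(p,A)\in J_+ u(x_0)$''.} The ``$\Leftarrow$'' direction is immediate: if $\zeta\in C^2(Q)$ is a test function admissible at $x_0$, then after subtracting the constant $\zeta(x_0)-u(x_0)$ (which changes neither $D\zeta$ nor $D^2\zeta$) we may assume $\zeta$ touches $u$ at $x_0$, and a second-order Taylor expansion of $\zeta$ shows $(D\zeta(x_0),D^2\zeta(x_0))\in J_+ u(x_0)$, so the semijet hypothesis applies verbatim. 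For ``$\Rightarrow$'', fix $(p,A)\in J_+ u(x_0)$, and for $\eps>0$ use the \emph{quadratic} test function $\zeta_\eps(y):=u(x_0)+p\cdot(y-x_0)+\tfrac12(y-x_0)^T(A+\eps I)(y-x_0)$; the $J_+$-inequality gives $u(y)-\zeta_\eps(y)\le o(|y-x_0|^2)-\tfrac\eps2|y-x_0|^2\le-\tfrac\eps4|y-x_0|^2$ for $y$ near $x_0$, so $\zeta_\eps$ touches $u$ from above at $x_0$ and is therefore admissible in Definition~\ref{def:viscosity_solution}, with $\zeta_\eps(x_0)=u(x_0)$, $D\zeta_\eps(x_0)=p$, $D^2\zeta_\eps(x_0)=A+\eps I$. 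Hence $F(x_0,u(x_0),p,A+\eps I)\le 0$, and letting $\eps\downarrow0$ and using continuity of $F$ in its matrix argument yields $F(x_0,u(x_0),p,A)\le 0$.

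\emph{Stage 2: the pointwise semijet condition $\iff$ the $\bar J_+$ condition.} One implication is trivial since $J_+ u(x_0)\subseteq\bar J_+ u(x_0)$. For the other, take $(p,A)\in\bar J_+ u(x_0)$ with approximating data $x_n\to x_0$ and $(p_n,A_n)\in J_+ u(x_n)$; the pointwise condition gives $F(x_n,u(x_n),p_n,A_n)\le 0$ for every $n$. Since
\[
F(x,r,p,A)=\max\Big\{\max_{1\le i\le d}\big\{-x_i^2(1-x_i)^2(A_{ii}-p_i^2)-c_ie^{-r}\big\},\,1-(b-g(x))e^{-r}\Big\}
\]
is built from the continuous functions $x\mapsto x_i^2(1-x_i)^2$, $x\mapsto b-g(x)$, $r\mapsto e^{-r}$ and the polynomials $(p,A)\mapsto A_{ii}-p_i^2$ by finitely many sums, products and maxima, it is continuous on $Q\times\R\times\R^d\times\Sym$, and letting $n\to\infty$ gives $F(x_0,u(x_0),p,A)\le 0$.

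I expect no genuine obstacle. The only step that is more than bookkeeping is the ``$\Rightarrow$'' part of Stage~1, and the one thing to get right there is the sign of the Hessian perturbation — $+\eps I$ for the superjet (so that $\zeta_\eps$ really sits above $u$ near $x_0$) and $-\eps I$ for the subjet — after which the unperturbed inequality is recovered purely from continuity of $F$. In particular, the two difficulties highlighted in the introduction, the absence of boundary conditions and the lack of uniform ellipticity, are irrelevant here: this lemma is a soft, purely local restatement of the definition, exactly as in the classical theory of \cite{crandall1992user}.
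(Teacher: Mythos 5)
The paper states this lemma without proof, treating it as the standard equivalence between the test-function and semijet formulations from the Crandall--Ishii--Lions theory, so there is no in-paper argument to compare against. Your proof is correct and is precisely the standard two-stage argument: pass between test functions and pointwise semijets via a quadratic perturbation with Hessian $A\pm\eps I$ (with the sign chosen so that the quadratic really dominates near $x_0$), then pass to the closures $\bar J_\pm$ using continuity of $F$ in $(x,r,p,A)$ together with $u(x_n)\to u(x_0)$ from $u\in C(Q)$. Nothing is missing.

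One observation worth making explicit rather than silently absorbing: Definition~\ref{def:viscosity_solution} as printed says ``$\zeta-u$ has a local maximum'' in the subsolution clause. Read literally, $\zeta-u$ having a local maximum at $x_0$ gives $u(y)\geq u(x_0)+\zeta(y)-\zeta(x_0)$ near $x_0$, which produces an element of $J_- u(x_0)$, a subjet, whereas the lemma characterizes subsolutions via superjets $\bar J_+ u(x_0)$. You implicitly adopt the standard convention (test $\zeta$ touching $u$ from above, i.e.\ $u-\zeta$ has a local maximum), which is the only reading consistent with the lemma and with the rest of the paper; that is the right call, but the sign discrepancy in the definition deserves to be flagged rather than repaired silently inside the phrase ``admissible test function.''
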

Here and in the following, by $\bar J_+ u\, (x_0)$, $\bar J_- v\, (y_0)$ we denote as usual the set of all generalized super- and subjets, respectively:
\begin{enumerate}
\item  For $(p,A)\in \R^d \times \Sym $, we have $(p,A)\in J_+ u(x_0)$ if and only if $(p,A)$ is a superjet of $u$ at $x_0$, i.e., 
\[
 u(x) \leq u(x_0) + p\cdot \left(x-x_0\right) + \frac12\left(x-x_0\right) \cdot A \left(x-x_0\right) + o\left(\left|x-x_0\right|^2\right) \quad \text{as }x\to x_0.
\]
The set of all generalized superjets $\bar J_+ u(x_0)$ is simply given by the topological closure of this set.
\item  Similarly, for $(p,A)\in \R^d \times \Sym $, we have $(p,A)\in J_- v(y_0)$ if and only if $(p,A)$ is a subjet of $v$ at $y_0$, i.e., 
\[
 v(y) \geq v(y_0) + p\cdot \left(y-y_0\right) + \frac12\left(y-y_0\right) \cdot A \left(y-y_0\right) + o\left(\left|y-y_0\right|^2\right) \quad \text{as }y\to y_0.
\]
The set of all generalized subjets $\bar J_- v(y_0)$ is the closure of $J_- v(y_0)$.
\end{enumerate}


\medskip


We first state and prove some lemmas which will be useful for the proof of the theorem.
The first lemma exploits the degeneracy at the boundary by adding a penalization- or barrier-term which diverges at the boundary.
Note that the barriers $u_\eps \leq u$ and $v^\eps \geq v$ diverge as we approach the boundary $\partial Q$.

\begin{lem}\label{lem:log}
Let $\alpha,\eps>0$. 
For $u,v\in C^{0,1}(Q)$ let
\begin{align}\label{eq:def_ueps}
u_\eps (x) := u(x) - \eps \,\Phi(x), \quad
v^\eps (y) := v(y) + \eps \, \Phi(y),
\end{align}
where 
\begin{equation}\label{eq:def_Phi}
\Phi(x) := -\sum_{i=1}^d \left\{ \log x_i + \log(1-x_i)\right\} \geq 0,
\end{equation}
and let $x_{\alpha,\eps}, y_{\alpha,\eps}$ be such that 
\begin{equation}\label{eq:xymax}
u_\eps(x_{\alpha,\eps})-v^\eps(y_{\alpha,\eps}) - \frac\alpha2 \left| x_{\alpha,\eps}-y_{\alpha,\eps}\right|^2 
=\sup_{(x,y)\in Q\times Q} \left\{ u_\eps(x)-v^\eps(y) - \frac\alpha2 \left| x-y\right|^2  \right\}.
\end{equation}
Then 
\[
\alpha  \left| x_{\alpha,\eps}-y_{\alpha,\eps}\right|
\leq 2 \max\left\{ \lip(u), \lip(v)\right\}.
\]
\end{lem}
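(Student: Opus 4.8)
The plan is to run the classical doubling-of-variables estimate, with the one extra observation that makes the lemma work: the barrier term $\eps\,\Phi$ is \emph{not} Lipschitz up to $\partial Q$, but it cancels in the symmetrized inequality and therefore does not spoil the bound. Concretely, I would proceed as follows.

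First I would verify that the supremum in \eqref{eq:xymax} is actually attained at an interior point, so that $x_{\alpha,\eps},y_{\alpha,\eps}\in Q$ make sense. Since $u,v\in C^{0,1}(Q)$ are bounded and $-\eps\,\Phi\le 0$, one has for all $(x,y)\in Q\times Q$
\[
u_\eps(x)-v^\eps(y)-\tfrac\alpha2|x-y|^2
\;\le\; \|u\|_{L^\infty(Q)}+\|v\|_{L^\infty(Q)}-\eps\,\Phi(x)-\eps\,\Phi(y),
\]
and by \eqref{eq:def_Phi} the right-hand side tends to $-\infty$ as soon as $x$ or $y$ approaches $\partial Q$. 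Hence any maximizing sequence stays in a fixed compact subset of $Q\times Q$, and by continuity the supremum is achieved at some $(x_{\alpha,\eps},y_{\alpha,\eps})\in Q\times Q$.

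Next, write $\hat x:=x_{\alpha,\eps}$, $\hat y:=y_{\alpha,\eps}$ and let $M$ denote the value of the supremum. Testing the maximality of $M$ against the two admissible pairs $(\hat x,\hat x)$ and $(\hat y,\hat y)$ yields $M\ge u_\eps(\hat x)-v^\eps(\hat x)$ and $M\ge u_\eps(\hat y)-v^\eps(\hat y)$. Adding these and using $M=u_\eps(\hat x)-v^\eps(\hat y)-\tfrac\alpha2|\hat x-\hat y|^2$ gives, after rearranging,
\[
\alpha\,|\hat x-\hat y|^2 \;\le\; \bigl[u_\eps(\hat x)-u_\eps(\hat y)\bigr]+\bigl[v^\eps(\hat x)-v^\eps(\hat y)\bigr].
\]
Now I would substitute $u_\eps=u-\eps\,\Phi$ and $v^\eps=v+\eps\,\Phi$ from \eqref{eq:def_ueps}: the contributions $\mp\eps\bigl[\Phi(\hat x)-\Phi(\hat y)\bigr]$ cancel, leaving
\[
\alpha\,|\hat x-\hat y|^2 \;\le\; \bigl[u(\hat x)-u(\hat y)\bigr]+\bigl[v(\hat x)-v(\hat y)\bigr]
\;\le\; \bigl(\lip(u)+\lip(v)\bigr)\,|\hat x-\hat y|,
\]
where the last step uses $\hat x,\hat y\in Q$ and the Lipschitz continuity of $u$ and $v$ on $Q$. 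If $\hat x=\hat y$ there is nothing to prove; otherwise dividing by $|\hat x-\hat y|$ and bounding $\lip(u)+\lip(v)\le 2\max\{\lip(u),\lip(v)\}$ gives the claim.

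The only point requiring care — and the reason symmetrization is essential rather than testing against a single diagonal point — is precisely the cancellation of the barrier: a one-sided comparison would leave an uncontrolled term $\eps\,\Phi$, which blows up near $\partial Q$, whereas adding the two test inequalities eliminates it exactly. Everything else is routine.
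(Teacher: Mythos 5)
Your proof is correct and uses essentially the same approach as the paper: both test the maximality against the two diagonal pairs $(\hat x,\hat x)$ and $(\hat y,\hat y)$ and exploit the cancellation of the barrier $\Phi$. The only cosmetic difference is that you add the two resulting inequalities, so the $\pm\eps\bigl[\Phi(\hat x)-\Phi(\hat y)\bigr]$ terms cancel identically, whereas the paper keeps the two estimates separate and picks the more favorable one according to the sign of $\Phi(\hat x)-\Phi(\hat y)$; both routes yield the same bound $2\max\{\lip(u),\lip(v)\}$.
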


Here $\lip(u) := \sup_{x,y\in Q} \frac{|u(x)-u(y)|}{|x-y|}$ denotes the Lipschitz constant of the function $u$ on $Q$.


\begin{proof}
Testing the maximality \eqref{eq:xymax} with the pair $(x_{\alpha,\eps},x_{\alpha,\eps})$ yields
\[
u_\eps(x_{\alpha,\eps})-v^\eps(y_{\alpha,\eps}) - \frac\alpha2 \left| x_{\alpha,\eps}-y_{\alpha,\eps}\right|^2
 \geq u_\eps(x_{\alpha,\eps})-v^\eps(x_{\alpha,\eps}) 
\]
and hence after reordering and using the definition of $v^\eps$
\[
\frac\alpha2 \left| x_{\alpha,\eps}-y_{\alpha,\eps}\right|^2 \leq v^\eps(x_{\alpha,\eps}) -v^\eps(y_{\alpha,\eps}) 
\leq \lip(v) \left| x_{\alpha,\eps}-y_{\alpha,\eps}\right| +\eps \left( \Phi(x_{\alpha,\eps})-
\Phi(y_{\alpha,\eps}) \right).
\]
However, we have no control over $\Phi(x_{\alpha,\eps}) -\Phi(y_{\alpha,\eps})$.
To overcome this difficulty, we test the maximality as well with $(y_{\alpha,\eps},y_{\alpha,\eps})$ instead of $(x_{\alpha,\eps},x_{\alpha,\eps})$ and obtain
\[
u_\eps(x_{\alpha,\eps})-v^\eps(y_{\alpha,\eps}) - \frac\alpha2 \left| x_{\alpha,\eps}-y_{\alpha,\eps}\right|^2
 \geq u_\eps(y_{\alpha,\eps})-v^\eps(y_{\alpha,\eps})
\]
which yields
\[
\frac\alpha2 \left| x_{\alpha,\eps}-y_{\alpha,\eps}\right|^2  \leq
u_\eps(x_{\alpha,\eps}) - u_\eps(y_{\alpha,\eps})
\leq \lip(u) \left| x_{\alpha,\eps}-y_{\alpha,\eps}\right| -\eps \left( \Phi(x_{\alpha,\eps})-
\Phi(y_{\alpha,\eps}) \right).
\]
Since either $\Phi(x_{\alpha,\eps}) -\Phi(y_{\alpha,\eps}) \geq0$ or $\Phi(x_{\alpha,\eps}) -\Phi(y_{\alpha,\eps})<0$, we may use our favorite among the two above estimates which yields
\[
\frac\alpha2 \left| x_{\alpha,\eps}-y_{\alpha,\eps}\right|^2 \leq
\max\left\{\lip(u) ,\lip(v)\right\} \left| x_{\alpha,\eps}-y_{\alpha,\eps}\right|. \qedhere
\]
\end{proof}


Clearly, modifying a strong solution $u$ of $F(x,u,Du,D^2u)=0$ by the logarithmic barrier $ \Phi$ given in \eqref{eq:def_Phi} yields again a solution $u^\eps=u-\eps \,\Phi$ to some modified equation $F_\eps(x,u_\eps,Du_\eps, D^2u_\eps)=0$. The following lemma states that this is also true in the case of viscosity solutions. 
Although the modified functions $u_\eps $ and $v^\eps$ diverge at the boundary, thanks to the degeneracy, the modified degenerate elliptic operators $F_\eps$ and $F_{-\eps}$ are well-behaved close to the boundary.

\begin{lem}\label{lem:epsviscositysolution}
Let $u,v \in C^0(Q)$ satisfy \eqref{eq:subsolution}.
Then $u_\eps $ and $v^\eps$ defined via \eqref{eq:def_ueps} satisfy
\begin{equation}\label{eq:subsolution}
F_\eps(x,Du_\eps,D^2u_\eps; u) \leq 0 \quad \text{and} \quad F_{-\eps}(x,Dv^\eps,D^2v^\eps; v) \geq 0 \quad \text{in the viscosity sense,}
\end{equation}
where for $x\in Q$, $p\in \R^d$, $A\in \Sym$ and $u\colon Q\to \R$,
\begin{equation}\label{eq:def_Feps}
\begin{split}
F_\eps(x,p,A; u):= 
\max\Big\{\max_{1\leq i \leq d} \big\{-(x_i^2 (1-x_i)^2  \left( A_{ii} -  p_i^2 \right)- e^{- u(x)} 
+&f_{\eps}(x_i,p_i)\big\} , \\
&1-(b-g(x)) e^{-u(x)}\Big\}
\end{split}
\end{equation}
and
\[
f_{\eps}(x_i,p_i):=-\eps \left((1-x_i)^2 +x_i^2 \right) 
+ \eps^2 \left(1-2x_i\right)^2
+2\eps \,x_i \left( 1-x_i \right)\left(2x_i-1\right)  p_i
\]
\end{lem}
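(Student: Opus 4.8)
The plan is to verify the viscosity subsolution property of $u_\eps$ by a direct change-of-variables computation at the level of test functions, and then do the analogous (essentially identical) computation for $v^\eps$. So suppose $\zeta \in C^2(Q)$ touches $u_\eps$ from above at a point $x_0 \in Q$, i.e.\ $\zeta - u_\eps$ has a local minimum at $x_0$. Since $\Phi \in C^\infty(Q)$ (the boundary is not in $Q$, so $\log x_i$ and $\log(1-x_i)$ are smooth there), the function $\tilde\zeta := \zeta + \eps\,\Phi$ is $C^2$ near $x_0$ and $\tilde\zeta - u = \zeta - u_\eps$ has a local minimum at $x_0$; that is, $\tilde\zeta$ touches $u$ from above at $x_0$. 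By the hypothesis that $u$ is a subsolution of $F(x,u,Du,D^2u) = 0$ we get $F(x_0, \tilde\zeta(x_0), D\tilde\zeta(x_0), D^2\tilde\zeta(x_0)) \leq 0$. The task is then purely algebraic: rewrite this inequality in terms of $\zeta$ and show it equals $F_\eps(x_0, D\zeta(x_0), D^2\zeta(x_0); u) \leq 0$.

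The computation that drives everything is the identity $D\tilde\zeta = D\zeta + \eps D\Phi$ and $D^2\tilde\zeta = D^2\zeta + \eps D^2\Phi$, together with $u = \tilde\zeta - \eps\Phi$ at $x_0$, hence $e^{-u(x_0)} = e^{-\zeta(x_0)} e^{\eps \Phi(x_0)}$ — wait, but $F_\eps$ as written carries $e^{-u(x)}$, not $e^{-u_\eps(x)}$, so in fact the obstacle and zeroth-order terms are kept in terms of $u$ itself and one only needs to track the second-order term. Concretely, $\partial_{x_i}\Phi = -\frac{1}{x_i} + \frac{1}{1-x_i} = \frac{2x_i - 1}{x_i(1-x_i)}$ and $\partial_{x_ix_i}\Phi = \frac{1}{x_i^2} + \frac{1}{(1-x_i)^2} = \frac{(1-x_i)^2 + x_i^2}{x_i^2(1-x_i)^2}$, and these are exactly the combinations that, once multiplied by the degenerate weight $x_i^2(1-x_i)^2$, produce the polynomial correction terms: $x_i^2(1-x_i)^2 \cdot \eps\,\partial_{x_ix_i}\Phi = \eps((1-x_i)^2 + x_i^2)$, and expanding $(A_{ii} - p_i^2)$ with $p_i = \zeta_{x_i} + \eps\Phi_{x_i}$ produces the cross term $2\eps x_i(1-x_i)(2x_i-1)\zeta_{x_i}$ — matching $f_\eps$ after careful bookkeeping of signs. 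This is the place where the "degeneracy tames the divergent barrier": the barrier and its derivatives blow up at $\partial Q$, but the weight $x_i^2(1-x_i)^2$ kills the singularity, leaving the bounded polynomial $f_\eps$. I would lay out this substitution cleanly in one display, being careful that $F_\eps$ uses $e^{-u(x)}$ and not $e^{-u_\eps(x)}$ — i.e.\ the lemma's $F_\eps$ is written with the \emph{original} zeroth-order data, so the $e^{\eps\Phi}$ factors do not appear and only the elliptic term gets the $f_\eps$ correction.

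The supersolution statement for $v^\eps$ is handled symmetrically: if $\zeta$ touches $v^\eps$ from below at $y_0$, then $\zeta - \eps\Phi$ touches $v$ from below at $y_0$, so $F(y_0, \zeta(y_0) - \eps\Phi(y_0), D\zeta - \eps D\Phi, D^2\zeta - \eps D^2\Phi) \geq 0$, and the same algebra (with $\eps$ replaced by $-\eps$, which is precisely why the statement invokes $F_{-\eps}$) rewrites this as $F_{-\eps}(y_0, D\zeta(y_0), D^2\zeta(y_0); v) \geq 0$. One should note the harmless abuse that in \eqref{eq:def_Feps} the subscript $\eps$ can be any real number, so $f_{-\eps}$ is obtained from $f_\eps$ by flipping the sign of $\eps$ throughout (the $\eps^2$ term is of course unchanged).

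The main obstacle — really the only one — is getting the signs and coefficients in $f_\eps$ to come out exactly right, since the $\max_i\{\cdots\}$ structure means each coordinate contributes its own correction and one must expand $-x_i^2(1-x_i)^2((\zeta_{x_i} + \eps\Phi_{x_i})^2 + \eps\Phi_{x_ix_i})$ and collect by powers of $\eps$; the $\eps^0$ term reproduces $-x_i^2(1-x_i)^2(\zeta_{x_i}^2)$ hidden inside $F_\eps$, the $\eps^1$ terms give $-\eps((1-x_i)^2+x_i^2) + 2\eps x_i(1-x_i)(2x_i-1)\zeta_{x_i}$ (using $x_i^2(1-x_i)^2\Phi_{x_i} = x_i(1-x_i)(2x_i-1)$), and the $\eps^2$ term gives $\eps^2(1-2x_i)^2$ (using $x_i^2(1-x_i)^2\Phi_{x_i}^2 = (2x_i-1)^2$); matching these against the stated $f_\eps$ is routine but must be done with care about whether the correction is added or subtracted, which depends on keeping straight that $p_i$ in $F$ is $\zeta_{x_i} + \eps\Phi_{x_i}$ while $p_i$ in $F_\eps$ is $\zeta_{x_i}$. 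Everything else — the smoothness of $\Phi$ on $Q$, the preservation of the touching condition under adding a $C^2$ function, the fact that the obstacle term $1 - (b-g(x))e^{-u(x)}$ is literally unchanged — is immediate.
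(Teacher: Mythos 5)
Your proof is correct and follows the same route as the paper's: translate the touching data for $u_\eps$ into touching data for $u$ by Taylor-expanding $\Phi$ to second order, invoke the subsolution property of $u$, and then collect the $\eps$-corrections to recognize $f_\eps$, exploiting that the degenerate weight $x_i^2(1-x_i)^2$ cancels the singular derivatives of $\Phi$. The only nominal difference is that the paper phrases the correspondence at the level of (generalized) superjets in $\bar J_+$, with an approximation step to pass to the closure, while you phrase it with $C^2$ test functions; these formulations are equivalent and the algebra is identical.
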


Note that the $u$-variable in $F_\eps$ is frozen in the sense that we plug in the fixed function $u$, not the candidate $u_\eps$. On the one hand, then the terms containing $u$ become simply another $x$-dependence in the equation. On the other hand, we want to keep track of the frozen variable to remember the crucial strict monotonicity \eqref{eq:monotonicity_classical} in that variable.
Note furthermore that the $u$-dependence is only pointwise.


\begin{proof}
We only show the statement for $u$ as the one for $v$ is completely analogous.
Let $x_0\in Q$ be fixed.

 Note that there is a one-to-one correspondence between $J_+u_\eps(x_0)$ and $J_+ u(x_0)$: Let $(p_\eps, A_\eps) \in \bar J_+ u_\eps(x_0)$, i.e.,
\begin{align*}
u(x)-\eps\Phi(x)
\leq& u(x_0) - \eps \Phi(x_0)+ p_\eps^T \left(x-x_0\right) + \frac12 \left(x-x_0\right)^T A_\eps \left(x-x_0\right) +o\left(\left|x-x_0\right|^2\right).
\end{align*}
Developing $\Phi$ to second order around $x_0$
\begin{align*}
u(x) \leq u(x_0) + \left(p_\eps^T+\eps D\Phi(x_0)\right) \left(x-x_0\right) +\frac12 \left(x-x_0\right)^T \left(A_\eps+\eps D^2\Phi(x_0) \right)&\left(x-x_0\right)\\
 &+o\left(\left|x-x_0\right|^2\right),
\end{align*}
i.e.,
\[
(p_\eps+\eps D\Phi(x_0)^T, A_\eps+\eps D^2\Phi(x_0) ) \in J_+ u(x_0).
\]

Therefore, if $(p_\eps, A_\eps) \in  \bar J_+ u_\eps(x_0)$ is a generalized superjet of $u$ at $x_0$, then using the above argument for an approximating sequence, we obtain
\[
(p_\eps+\eps D\Phi(x_0)^T, A_\eps+\eps D^2\Phi(x_0) ) \in \bar J_+ u(x_0).
\]
Using the fact that $u$ is a subsolution, i.e., \eqref{eq:subsolution}, we obtain
\[
F\left(x_0,u,p_\eps+\eps D\Phi(x_0)^T, A_\eps+\eps D^2\Phi(x_0)\right) \leq 0.
\]
That means that at the point $x=x_0$ we have
\begin{align*}
 &\max\Big\{\max_{1\leq i \leq d} \big\{-x_i^2 (1-x_i)^2  \left( A_{\eps,ii} -  p_{\eps,i}^2 \right) + \\
 &+ x_i^2\left(1-x_i\right)^2 \left( - \eps\Phi_{x_ix_i}+ 2 \eps \Phi_{x_i}p_{\eps,i} + \eps^2 \Phi^2_{x_i} \right)
 - e^{- u(x)} \big\} ,  1-(b-g(x)) e^{-u(x)}&\Big\} \leq 0.
\end{align*}
As $\Phi_{x_i}=-\frac1{x_i} + \frac1{1-x_i}$ and $\Phi_{x_ix_i} = \frac1{x_i^2} + \frac1{(1-x_i)^2}$, this is nothing else but \eqref{eq:def_Feps}.
\end{proof}

One important ingredient of the proof is the by now classical doubling of variables first introduced by Jensen and then formulated by Ishii in (a more general form than) the following lemma.

\begin{lem}[Ishii's Lemma, see Theorem 3.2 in \cite{crandall1992user}]\label{lem:Ishii}
Let $u, v\in C^0(Q)$, $\alpha>0$ and suppose 
\[
u(x_0)-v(x_0)-\frac\alpha2 \left|x_0-y_0\right|^2 = 
\max_{(x,y)\in Q\times Q} \left\{ u(x)-v(x)-\frac\alpha2 \left|x-y\right|^2 \right\}
\]
Then there exist $A,B\in \Sym$ such that
\begin{equation}\label{eq:Ishiigradients}
\left(\alpha(x_0-y_0), A \right) \in \bar J_Q^+ u\, (x_0), 
\left(\alpha(x_0-y_0), B \right) \in \bar J_Q^- v\, (y_0), 
\end{equation}
and 
\begin{equation}\label{eq:Ishiiineq}
-3\alpha 
\begin{pmatrix}
I_d&0\\
0& I_d
\end{pmatrix}
\leq 
\begin{pmatrix}
A&0\\
0&-B
\end{pmatrix}
\leq
3\alpha
\begin{pmatrix}
I_d& -I_d\\
-I_d& I_d
\end{pmatrix}.
\end{equation}
\end{lem}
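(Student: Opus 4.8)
This is Theorem 3.2 of \cite{crandall1992user} (the ``theorem on sums'') applied to the test function $\varphi(x,y):=\frac\alpha2|x-y|^2$ on the open set $Q\times Q\subset\R^{2d}$, at the interior maximum point $(x_0,y_0)$, so the plan is to quote that theorem and make its constants explicit in the present situation. That theorem asserts that for every $t>0$ there are symmetric matrices $A,B$ with
\[
\big(D_x\varphi(x_0,y_0),A\big)\in\bar J_Q^+u(x_0),\qquad\big(-D_y\varphi(x_0,y_0),B\big)\in\bar J_Q^-v(y_0),
\]
and
\[
-\Big(\tfrac1t+\|M\|\Big)I_{2d}\ \leq\ \begin{pmatrix}A&0\\0&-B\end{pmatrix}\ \leq\ M+t\,M^2,\qquad M:=D^2\varphi(x_0,y_0).
\]
For $\varphi(x,y)=\frac\alpha2|x-y|^2$ one has $D_x\varphi=-D_y\varphi=\alpha(x_0-y_0)$, which gives \eqref{eq:Ishiigradients}, and $M=\alpha\left(\begin{smallmatrix}I_d&-I_d\\-I_d&I_d\end{smallmatrix}\right)$. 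Since the symmetric matrix $\left(\begin{smallmatrix}I_d&-I_d\\-I_d&I_d\end{smallmatrix}\right)$ has eigenvalue $0$ on the diagonal subspace $\{(\xi,\xi)\}$ and eigenvalue $2$ on its orthogonal complement $\{(\xi,-\xi)\}$, we get $\|M\|=2\alpha$ and $M^2=2\alpha M$. Choosing $t=1/\alpha$ then turns the displayed two-sided estimate into exactly \eqref{eq:Ishiiineq}: the right-hand side becomes $M+\frac1\alpha M^2=3M=3\alpha\left(\begin{smallmatrix}I_d&-I_d\\-I_d&I_d\end{smallmatrix}\right)$ and the left-hand side becomes $-(\alpha+2\alpha)I_{2d}=-3\alpha I_{2d}$.

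For completeness I would also recall the structure of the proof of the general theorem. One first perturbs $u$ and $v$ by small strictly convex, resp.\ concave, quadratics to make the maximum of $w(x,y):=u(x)-v(y)-\varphi(x,y)$ strict, and removes the perturbation at the end. One then regularizes: the sup-convolution $u^\beta(x):=\sup_{z}\{u(z)-\frac1{2\beta}|x-z|^2\}$ is semiconvex with $D^2u^\beta\geq-\frac1\beta I$, the inf-convolution $v_\beta(y):=\inf_{z}\{v(z)+\frac1{2\beta}|y-z|^2\}$ is semiconcave with $D^2v_\beta\leq\frac1\beta I$, both converge locally uniformly to $u,v$, and the maximizers of $w^\beta(x,y):=u^\beta(x)-v_\beta(y)-\varphi(x,y)$ converge to $(x_0,y_0)$. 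The heart of the matter is a measure-theoretic statement: since $u^\beta(x)-v_\beta(y)$ is semiconvex, Alexandrov's theorem says it is pointwise twice differentiable almost everywhere, and Jensen's lemma says that arbitrarily close to the strict maximum of $w^\beta$ one finds a point $(\hat x,\hat y)$ of twice differentiability at which a linearly perturbed $w^\beta$ still has a local maximum. At such a point first-order optimality gives the common gradient of the jets (equal to $D_x\varphi$ up to the vanishing perturbation), and the pointwise second-order expansion together with second-order optimality produces a superjet of $u^\beta$ at $\hat x$ and a subjet of $v_\beta$ at $\hat y$ with Hessians $X,Y$ satisfying $-\frac1\beta I_{2d}\leq\diag(X,-Y)\leq D^2\varphi(\hat x,\hat y)$.

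Finally one transfers these back and passes to the limit: the characteristic property of sup-convolutions turns a superjet $(p,X)$ of $u^\beta$ at $\hat x$ into a superjet of $u$ at $\hat x+\beta p$ (and analogously for $v$), with the Hessian modified by the quadratic correction built into the sup-convolution; carrying this correction through the matrix inequality, letting the perturbations go to zero, and then $\beta\downarrow0$, one arrives at the bound with $M+tM^2$ (the parameter $t$ being a surviving copy of $\beta$) and at jets lying in the closures $\bar J_Q^\pm$, which exist precisely because one has only controlled things up to vanishing errors. I expect the genuinely delicate points to be the measure-theoretic core — Alexandrov's theorem together with Jensen's lemma, which jointly locate a point of twice differentiability near the maximum — and the bookkeeping of the quadratic correction in the jet transfer, which is what upgrades the naive upper bound $D^2\varphi$ to $M+tM^2$; the specialization to $\varphi=\frac\alpha2|x-y|^2$ and $t=1/\alpha$ carried out above is then a short computation. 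For the full argument we refer to \cite{crandall1992user}.
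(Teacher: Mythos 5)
Your proposal is correct and matches the paper's treatment: the paper gives no proof of this lemma, simply citing Theorem 3.2 of \cite{crandall1992user}, and your explicit specialization (computing $M=D^2\varphi$, noting $\|M\|=2\alpha$ and $M^2=2\alpha M$, and choosing $t=1/\alpha$ to obtain the constant $3\alpha$ on both sides) is exactly the standard derivation of the stated form, with the sketch of the sup-/inf-convolution and Jensen--Alexandrov argument being accurate as well.
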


Here $I_d$ and $0$ denote the identity and zero matrix in $\R^d$, respectively.
The inequalities in \eqref{eq:Ishiiineq} are to be understood in the sense of symmetric matrices (or equivalently symmetric bilinear forms).


\begin{proof}[Proof of Theorem \ref{thm:main}]
Suppose for a contradiction that 
\[
0< \delta = (u-v)(z)\quad \text{for some } z\in Q.
\]
Let $\alpha,\eps>0$ be fixed and let $u_\eps$ and $v^\eps $ be given by $\eqref{eq:def_ueps}$. (One may think of $\alpha \gg 1$ and $\eps \ll 1$.)

\step{Step 1:}  By Bolzano-Weierstrass, the supremum
\[
M_{\alpha,\eps} := \sup_{(x,y)\in Q\times Q} \left\{ u_\eps(x)-v^\eps(y) - \frac\alpha2\left|x-y\right|^2  \right\}
\]
is attained at some interior point $(x_{\alpha,\eps}, y_{\alpha,\eps}) \in Q\times Q$ since $ u_\eps(x) \to - \infty$ as $x\to \partial Q$ and $v^\eps (y) \to +\infty $ as $y\to \partial Q$.
Since $\bar Q$ is compact, there exist sequences $\alpha_n \uparrow \infty$, $\eps_n\downarrow 0$ such that 
\begin{align*}
x_n &:= x_{\alpha_n,\eps_n} \to \bar x \in \bar Q,\\
y_n &:= y_{\alpha_n,\eps_n} \to \bar y \in \bar Q.
\end{align*}
Lemma \ref{lem:log} yields that
\begin{align}\label{eq:alphap_bounded}
\alpha_n\left|x_n-y_n\right| \text{ stays bounded as }n\to \infty
\end{align}
and in particular, since $\alpha_n\to \infty$, this implies $\left|x_n-y_n\right| \to 0$, i.e., $\bar x = \bar y$.

\step{Step 2:} Since the maximizer $(x_n, y_n)$ of $M_n$ is an interior point,  Ishii's Lemma \ref{lem:Ishii} furnishes the existence of two symmetric matrices $A_n, B_n \in \Sym$ which together with $\alpha_n(x_n-y_n)$  contribute second-order sub- and superjets for $u$ and $v$, respectively:
\[
\left(\alpha_n(x_n-y_n), A_n \right) \in \bar J_+ u(x_n),\quad 
\left( \alpha_n(x_n-y_n), B_n \right) \in \bar J_- v(y_n)
\]
and furthermore inequality \eqref{eq:Ishiiineq} holds.

\step{Step 3:}
It is straightforward to see that $F_\eps$ is strictly increasing in the frozen $u$-variable: For every $R>0$ there exists $\theta>0$ such that for all $r'\leq r\leq R$
\begin{equation}\label{eq:monotonicity}
 \theta \left(r-r'\right) \leq F_\eps(x,p,A;r) - F_\eps(x,p,A;r').
\end{equation}
Recall that the dependence of $F_\eps$ on $u$ is pointwise.
Hence by the maximality of $(x_n,y_n)$
\begin{align*}
\theta \delta 
&\stackrel{\phantom{\eqref{eq:monotonicity}}}{\leq} \theta \left(u(x_n)-v(y_n)\right)\\
&\stackrel{\eqref{eq:monotonicity}}{\leq} F_{\eps_n}( x_n,\alpha_n(x_n-y_n), A_n; u(x_n)) 
- F_{\eps_n}( x_n,\alpha_n(x_n-y_n), A_n; v(y_n)).
\end{align*}
By Lemma \ref{lem:epsviscositysolution}, the modified functions $u_\eps$ and $v^\eps$ are sub- and supersolutions (of the modified operators), respectively, so we obtain
\[
F_{\eps_n}(x_n,\alpha_n(x_n-y_n),A_n;u) 
\leq 0 \leq F_{-\eps_n}(y_n,\alpha_n(x_n-y_n), B_n; v).
\]
Combining the two above inequalities yields
\begin{align}\label{eq:tdelta<deltaF}
0<\theta \delta &\leq F_{-\eps_n}(y_n,\alpha_n(x_n-y_n),B_n;v(y_n))
-F_{\eps_n} (x_n,\alpha_n(x_n-y_n),A_n; v(y_n)).
\end{align}

\step{Step 4:} We claim that
\begin{equation}\label{eq:FepsAB}
\begin{split}
&F_{-\eps_n}(y_n,\alpha_n(x_n-y_n),B_n;v(y_n))
-F_{\eps_n} (x_n,\alpha_n(x_n-y_n),A_n; v(y_n))\\
&\;\leq 3\alpha_n\left|x_n-y_n\right|^2 +  \alpha_n^2 \left| x_n-y_n\right|^3 
+8 \eps_n \left( 1+ \alpha_n\left|x_n-y_n\right| + \eps_n\right)
+ \omega_g(|x_n-y_n|),
\end{split}
\end{equation}
which will conclude the proof of the theorem. Here $\omega_g$ denotes the modulus of continuity of the given function $g$.
 Indeed, if the claim is true, then by \eqref{eq:alphap_bounded} we obtain
\begin{align*}
\limsup_{n\to\infty} F^{\eps_n}(y_n,\alpha_n(x_n-y_n),B_n;v(y_n))
-F_{\eps_n} (x_n,\alpha_n(x_n-y_n),A_n; v(y_n)) \leq 0,
\end{align*}
a contradiction to the strict positivity \eqref{eq:tdelta<deltaF}.

We are left with proving \eqref{eq:FepsAB}.
To this end we will use the second inequality in \eqref{eq:Ishiiineq}, which simply means 
\begin{equation}\label{eq:AleqBsimple}
\xi^T A \xi - \eta^T B\eta \leq 3\alpha \left|\xi -\eta\right|^2 \quad \text{for all } \xi, \eta \in \R^d.
\end{equation}

In order to prove \eqref{eq:FepsAB}, let $\eps>0$, $x,y\in Q$, $p\in \R^d$, $A,B\in \Sym$ be given s.t.\ \eqref{eq:AleqBsimple} holds. For notational simplicity set $\sigma_i(x):= \left(1-x_i\right)x_i$. 

We distinguish two cases.
\step{Case 1: $F_{-\eps}(y,p,B;r) = 1-(b-g(y)) e^{-r}$.}

In this case
\[
F_{-\eps}(y,p,B;r) - F_\eps(x,p,A;r) \leq 1-(b-g(y)) e^{-r}- \big(1-(b-g(x)) e^{-r} \big) \leq \omega_g(|x-y|) e^{-r}.
\]
\step{Case 2: $F_{-\eps}(y,p,B;r) > 1-(b-g(y)) e^{-r}$.}

By definition of $F_{-\eps}$, there exists an index $j\in \{1,\dots,d\}$ such that
\[
F_{-\eps}(y,p,B;r) = - \left(\sigma_j(y) e_j\right)^T B\,  \sigma_j(y) e_j + \sigma_j^2(y) p_j^2
-e^{-r} +f_{-\eps}(y_j,p_j).
\]
Hence
\begin{align*}
F_{-\eps}(y,p,B;r) &-F_{\eps}(x,p,A;r)\\
\leq & \left(\sigma_j(x)e_j\right)^T A \,\sigma_j(x)e_j - \left(\sigma_j(y)e_j\right)^T B\, \sigma_j(y)e_j  \\
& + \left(\sigma_j^2(y) - \sigma_j^2(x)\right)p_j^2 + f_{-\eps}(y_j,p_j)-f_\eps(x_j,p_j).
\end{align*}
Applying \eqref{eq:AleqBsimple} to the first right-hand side term (with the collinear vectors $\xi= \sigma_j(x)e_j$ and $\eta=\sigma_j(y)e_j$), we obtain
\begin{align*}
F_{-\eps}(y,p,B;r) -F_{\eps}(x,p,A;r)
\leq &3\alpha \left( \sigma_j(x)- \sigma_j(y)\right)^2 
+\left|\sigma^2_j(x)-\sigma^2_j(y)\right| |p|^2\\
&+ \sup_{t\in (0,1)} \big( \left|f_{-\eps}(t,p_j) \right|+\left|f_\eps(t,p_j) \right|\big).
\end{align*}
Since the gradient $D\sigma_j(x)=\left(1-2x_j\right) e_j^T$ is bounded by $1$ and $0\leq \sigma_j(x) \leq 1$, we obtain 
$
\left|\sigma_j(x)-\sigma_j(y)\right| \leq \left|x-y\right|
$
and 
$
\left|\sigma^2_j(x)-\sigma^2_j(y)\right|  \leq \frac14 \left|x-y\right|.
$
Furthermore,
\[
 \sup_{t\in (0,1)} \big( \left|f_{-\eps}(t,p_j) \right|+\left|f_\eps(t,p_j) \right|\big) \leq 2\eps \left( 2+2|p|+\eps\right).
\]
and therefore \eqref{eq:FepsAB} holds. This concludes the proof of Theorem \ref{thm:main}.
\end{proof}

\section*{Acknowledgement}

The authors would like to thank Craig Evans who made this project possible and generously shared his ideas.

\bibliographystyle{acm}
\bibliography{lit}

\end{document}